\def\opn#1#2{\def#1{\operatorname{#2}}} % to make operators
\opn\chara{char} \opn\length{\ell} \opn\pd{pd} \opn\rk{rk}
\opn\projdim{proj\,dim} \opn\injdim{inj\,dim} \opn\rank{rank}
\opn\depth{depth} \opn\grade{grade} \opn\height{height}
\opn\embdim{emb\,dim} \opn\codim{codim}
\opn\Tr{Tr} \opn\bigrank{big\,rank}
\opn\superheight{superheight}\opn\lcm{lcm}
\opn\trdeg{tr\,deg}%
\opn\reg{reg} \opn\lreg{lreg} \opn\skel{skel}
\opn\multideg{multideg}
\opn\div{div} \opn\Div{Div} \opn\cl{cl} \opn\Cl{Cl}
\opn\Spec{Spec} \opn\Supp{Supp} \opn\supp{supp} \opn\Sing{Sing}
\opn\Ass{Ass}
\opn\Ann{Ann} \opn\Rad{Rad} \opn\Soc{Soc}
\opn\Ker{Ker} \opn\Coker{Coker} \opn\Im{Im} \opn\Hom{Hom}
\opn\Tor{Tor} \opn\Ext{Ext} \opn\End{End} \opn\Aut{Aut}
\opn\id{id}
\opn\nat{nat}
\opn\pff{pf}%   \pf exists already
\opn\Pf{Pf} \opn\GL{GL} \opn\SL{SL} \opn\mod{mod} \opn\ord{ord}
\opn\aff{aff} \opn\con{conv} \opn\relint{relint} \opn\st{st}
\opn\lk{lk} \opn\cn{cn} \opn\core{core} \opn\vol{vol}
\opn\link{link} \opn\star{star} \opn\skel{skel}
\opn\gr{gr}
\def\pot#1#2{#1[\kern-0.28ex[#2]\kern-0.28ex]}
\opn\dirlim{\underrightarrow{\lim}}
\opn\inivlim{\underleftarrow{\lim}}
\def\Implies{\ifmmode\Longrightarrow \else
     \unskip${}\Longrightarrow{}$\ignorespaces\fi}
\def\implies{\ifmmode\Rightarrow \else
     \unskip${}\Rightarrow{}$\ignorespaces\fi}
\def\iff{\ifmmode\Longleftrightarrow \else
     \unskip${}\Longleftrightarrow{}$\ignorespaces\fi}
\newtheorem{thm}{Theorem}[section]
\newtheorem{cor}[thm]{Corollary}
\newtheorem{lem}[thm]{Lemma}
\newtheorem{prop}[thm]{Proposition}
\newtheorem{exam}[thm]{Example}
\numberwithin{equation}{section}
\begin{document}
\bibliographystyle{amsplain}

%\date{}
\title{Resolution of Unmixed Bipartite Graphs }
\author{Fatemeh Mohammadi and Somayeh Moradi}
%%\thanks{}
\subjclass{13D02, 13P10, 13D40, 13A02}
\address{Fatemeh Mohammadi, Department of Pure Mathematics,
 Faculty of Mathematics and Computer Science,
 Amirkabir University of Technology (Tehran Polytechnic),
424, Hafez Ave., Tehran 15914, Iran.}
\email{f\_mohammadi@aut.ac.ir}
\address{Somayeh Moradi, Department of Pure Mathematics,
 Faculty of Mathematics and Computer Science,
 Amirkabir University of Technology (Tehran Polytechnic),
424, Hafez Ave., Tehran 15914, Iran.}
\email{s\_moradi@aut.ac.ir}
\begin{abstract}
For an unmixed bipartite graph $G$ we consider the lattice of vertex
covers $\mathcal{L}_G$ and compute depth, projective dimension and extremal Betti-numbers
of $R/I(G)$ in terms of this lattice.
\end{abstract}

\maketitle

\section*{Introduction}

In recent years edge ideals of graphs have been intensively studied.
One central question in this context is when the edge ideal is
Cohen-Macaulay. In the case of chordal graphs and bipartite graphs a
nice and complete answer is known, see \cite{HHZ1} and \cite{HHZ}. In particular, in these
cases one knows the projective dimension of the edge ideal. For
bipartite Cohen-Macaulay graphs the Cohen-Macaulay type is also
known. There are also several papers  in which the regularity of
edge ideals has been studied, (see \cite{HT}, \cite{HT2}, \cite{Z}). It is impossible to classify
all Cohen-Macaulay graphs or to give uniform formulas for the
projective dimension or the regularity for all graphs. Nevertheless
there are interesting classes of graphs where the homological data
of the edge ideals can be described. In this paper we consider
unmixed bipartite graphs and study the resolution of their edge
ideals.

To be more specific, let $G$ be a finite graph with vertex
set $V(G)=\{v_1,\ldots,v_n\}$  and edge set $E(G)$. Throughout we assume
that $G$ has no isolated vertices. The graph $G$ is called bipartite,
if $V(G)=X\cup Y$ with $X\cap Y= \emptyset$ such that $E(G)\subseteq  X\times Y$.

A vertex cover of $G$ is a subset $C$
of $V(G)$ such that each edge has at least one vertex in $C$. A
minimal vertex cover $C$ of $G$ is a vertex cover such that no
proper subset of $C$ is a vertex cover of $G$. The graph $G$ is
called unmixed if all its minimal vertex covers are of the same
cardinality.

Let $G$ be an unmixed bipartite graph with vertex set
 $V(G)=\{x_1,\ldots,x_n\}\cup \{y_1,\ldots,y_m\}$. Since $G$ is unmixed, it follows
that $n=m$. For any unmixed bipartite graph there is a perfect
matching, see \cite{V}. Therefore we may  assume that $\{x_i,y_i\}$ is an
edge of $G$ for all $i$. So each minimal vertex cover of $G$ is of
the form $\{x_{i_1},\ldots,x_{i_s},y_{i_{s+1}},\ldots,y_{i_n}\}$,
where $\{i_1,\ldots,i_n\}=[n]$. For any minimal vertex cover $C$ of
$G$ let $\bar{C}=C\cap \{x_1,\ldots,x_n\}$. Let $B_n$ be the Boolean
lattice on the set $\{x_1,\ldots,x_n\}$. In \cite{HHO} it is shown
that for any unmixed bipartite graph $G$, the subset
$$\mathcal{L}_G=\{\bar{C}: C {\rm \ is\ a\ minimal\ vertex\ cover\ of \
}G\}\subseteq \{x_1,\ldots,x_n\}$$ is a sublattice of $B_n$ which contains $\emptyset $ and $X$, and for any such
sublattice $\mathcal{L}$ of $B_n$, there exists an unmixed bipartite graph $G$ such that $\mathcal{L}=\mathcal{L}_G$. In our
description of the resolution of the edge ideal of an unmixed bipartite
graph, we use in a substantial way this characterization of such graphs  in terms of their vertex cover lattices.

In the following let $\mathcal{L}_G$ be a sublattice of the Boolean
lattice $B_n$ corresponding to the unmixed bipartite graph $G$.
Therefore $\wedge$ and $\vee$ in $\mathcal{L}_G$ are just taking the
intersection and union. Attached to the lattice $\mathcal{L}_G$ is a
monomial ideal $H_{\mathcal{L}_G}$ in the polynomial ring
$K[x_1,\ldots,x_n,y_1,\ldots,y_n]$, defined as follows: for each
element $p\in \mathcal{L}_G$, let $u_p=X_p Y_{[n]\setminus p}$,
where $X_p=\prod_{i\in p}x_i$ and $ Y_{[n]\setminus p}=\prod_{j\in
[n]\setminus p}y_j$. Then $H_{\mathcal{L}_G}$ is generated by the
monomials $u_p$, $p\in \mathcal{L}_G$. The edge ideal $I(G)$ which
we are interested in is just the Alexander dual of
$H_{\mathcal{L}_G}$. Thus we may apply the
Bayer-Charalambous-Popescu theory \cite[Theorem~2.8]{BCP} which
relates the multigraded extremal Betti-numbers of
$H_{\mathcal{L}_G}$ and $I(G)$.

In Section 1 we describe the multigraded minimal free resolution of
the monomial ideal $H_{\mathcal{L}_G}$.  In fact, the resolution we
describe in Theorem~\ref{res} is a variation of the  resolution of
Hibi ideals of meet-distributive semilattices,  given in
\cite[Theorem~2.1]{HHZ}. In Theorem~\ref{dif} we also describe the
differentials of this resolution. This information is not needed
later. More important is the fact, that the multigraded basis
elements of the resolution, can be identified with the Boolean
sublattices of $\mathcal{L}_G$, see Proposition~\ref{3}. Having this
identification, it turns out that the multigraded extremal
Betti-numbers correspond to the maximal Boolean sublattices of
$\mathcal{L}_G$. In Section 2 we use Alexander duality and Theorem
in \cite{BCP} to obtain the multigraded extremal Betti-numbers of
$I(G)$. With this information at hand, we can express the depth and
regularity of $R/I(G)$ in terms of the lattice $\mathcal{L}_G$, see
Corollary~\ref{cor1}. As a further corollary we obtain a lower bound
for the last nonzero total Betti-number of $R/I(G)$. We do not know
of any example where this lower is not achieved. It would be always
achieved if one could prove the following: all nonzero multigraded
Betti-number in the last step of the resolution are extremal (in the
multigraded sense). There is a simple argument, given in the proof
of Proposition \ref{cm} that whenever $I$ is monomial Cohen-Macaulay
ideal, then all multigraded extremal Betti-numbers appear  at the
end of the resolution of $I$.

After having finished the paper,  Professor Herzog informed us
that Kummini  in his thesis and in a preprint paper has also computed the depth and regularity of unmixed bipartite graphs, see \cite{K2}. His approach and the terms in which he expresses these invariants differ from ours.

\section{Minimal Free Resolution of $\mathcal{L}_G$}

The purpose of this section it to construct a resolution of the
ideal $H_{\mathcal{L}_G}$ which is a modification of the resolution
given \cite[Theorem~2.1]{HHZ} adopted to our situation. The
differences between our resolution and the one given in \cite{HHZ}
arises from the fact that the lattices under consideration are
differently embedded into Boolean lattice. This fact is important,
because the multidegrees of the resolution depends on the embedding.

In order to guarantee the minimality  of the resolution which we are going to describe we need the following result.

\begin{lem}\label{lem}
Let $p\in \mathcal{L}_G$. For any two distinct subsets
$S,S'\subseteq N(p)$, we have $\wedge\{q; q\in S\}\neq \wedge\{q;
q\in S'\}$.
\end{lem}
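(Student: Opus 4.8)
The plan is to reduce the entire statement to one disjointness fact about the ``missing parts'' of the elements covered by $p$. Write $N(p)=\{q_1,\dots,q_k\}$ for the elements covered by $p$ in $\mathcal{L}_G$, and set $A_i:=p\setminus q_i$ (recall that $\wedge$ and $\vee$ are just $\cap$ and $\cup$). Since each $q_i\subsetneq p$, every $A_i$ is a nonempty subset of $p$, and because $q_i\subseteq p$ we get, for any $S\subseteq N(p)$,
$$
\wedge\{q : q\in S\}=\bigcap_{q_i\in S}q_i=p\setminus\bigcup_{q_i\in S}A_i .
$$
Thus the lemma is equivalent to the injectivity of the map $S\mapsto\bigcup_{q_i\in S}A_i$, and I would establish this by showing that the $A_i$ are pairwise disjoint.

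The heart of the argument, and the only place where the lattice hypotheses enter, is this disjointness. Fix $i\neq j$. Because $\mathcal{L}_G$ is a sublattice of $B_n$, the union $q_i\cup q_j=q_i\vee q_j$ again lies in $\mathcal{L}_G$, and $q_i\subseteq q_i\cup q_j\subseteq p$, so it sits in the interval $[q_i,p]$. As $p$ covers $q_i$ this interval equals $\{q_i,p\}$. The value $q_i$ is impossible: it would force $q_j\subseteq q_i\subsetneq p$ with $q_j\neq q_i$, placing $q_i$ strictly between $q_j$ and $p$ and contradicting that $p$ covers $q_j$. Hence $q_i\cup q_j=p$, and therefore $A_i\cap A_j=(p\setminus q_i)\cap(p\setminus q_j)=p\setminus(q_i\cup q_j)=\emptyset$. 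I expect this covering/sublattice step to be the main obstacle; once it is in place everything that follows is formal bookkeeping.

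With pairwise disjoint nonempty $A_i$ in hand, the union map is injective: since $A_i$ meets $\bigcup_{q_j\in S}A_j$ precisely when $q_i\in S$, one recovers $S$ from the union alone. Consequently the meets $\wedge\{q : q\in S\}=p\setminus\bigcup_{q_i\in S}A_i$ are pairwise distinct as $S$ ranges over the subsets of $N(p)$, which is exactly the assertion of the lemma. The degenerate case $S=\emptyset$, read as the empty meet $p$, fits the same formula and collides with no nonempty $S$, since each nonempty $S$ yields a proper subset of $p$.

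It is worth recording that the same computation proves slightly more: the elements $p\setminus\bigcup_{q_i\in S}A_i$, for $S\subseteq N(p)$, are closed under $\cap$ and $\cup$ and span a Boolean sublattice of $\mathcal{L}_G$ of rank $k=|N(p)|$, with top $p$ and bottom $p\setminus\bigcup_i A_i$. This is precisely the identification of multigraded basis elements with Boolean sublattices invoked later, and it explains why the distinctness of these meets is what guarantees the minimality of the resolution.
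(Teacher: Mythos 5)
Your proof is correct, and it takes a genuinely different route from the paper's. The paper argues by contradiction inside the abstract lattice: assuming $\wedge\{q;q\in S\}=\wedge\{q;q\in S'\}$, it joins both sides with some $q_1\in S\setminus S'$ and uses distributivity together with the covering fact $q\vee q_1=p$ (for distinct lower neighbors) to force $q_1=p$, which is absurd. You instead argue directly through the concrete Boolean embedding: writing $A_i=p\setminus q_i$, your covering step $q_i\cup q_j=p$ --- which is exactly the same engine as the paper's $q\vee q_1=p$, proved the same way --- shows the $A_i$ are pairwise disjoint and nonempty, so the meets $\wedge\{q;q\in S\}=p\setminus\bigcup_{q_i\in S}A_i$ are determined by, and determine, $S$. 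The paper's version is shorter and never invokes the set-theoretic realization, only distributivity and covers; yours buys more structure for the same price: the disjointness of the $A_i$ immediately gives the inequality $|S'|-|S|\leq|\wedge\{q;q\in S\}|-|\wedge\{q;q\in S'\}|$ recorded right after Lemma~\ref{lem} (each element added to $S$ removes at least one point), and it anticipates Proposition~\ref{3}. One small caveat on your closing remark: what you establish there is that the meets span a Boolean \emph{sublattice} of $\mathcal{L}_G$ of rank $|N(p)|$, whereas Proposition~\ref{3} needs the slightly stronger statement that the \emph{interval} $[\wedge\{q;q\in N(p)\},p]$ of $\mathcal{L}_G$ is Boolean, i.e., that no other lattice elements lie between the bottom and $p$; that extra step is carried out in the paper's proof of Proposition~\ref{3} and does not follow from disjointness alone.
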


\begin{proof}
{Let $S,S'\subseteq N(p)$ be distinct subsets. One can assume that
$S\nsubseteq S'$. Let $q_1\in S\setminus S'$. If $\wedge\{q; q\in
S\}=\wedge\{q; q\in S'\}$, then $(\wedge\{q; q\in S\})\vee q_1=
(\wedge\{q; q\in S'\})\vee q_1$. Since $\mathcal{L}_G$ is
distributive $\wedge(\{q\vee q_1; q\in S\})= \wedge(\{q\vee q_1;
q\in S'\})$. For any $q\in S'$, $q\vee q_1=p$, therefore
$\wedge(\{q\vee q_1; q\in S'\})=p$. But then $\wedge(\{q\vee q_1;
q\in S\})=q_1$, a contradiction. }
\end{proof}

From the above lemma it is easy to see that for any subsets $S$ and
$S'$ of $N(p)$ with $S\subseteq S'$, we have $|S'|-|S|\leq
|\wedge\{q; q\in S\}|-|\wedge\{q; q\in S'\}|$. In the following we denote by $\hat{0}$ and $\hat{1}$
the minimal and maximal element of $\mathcal{L}_G$. For any $p\in
\mathcal{L}$, rank of  $p$ which is denoted by rank$(p)$,
 is the maximal length of chains descending from $p$.
We extend the partial rank order on
 $\mathcal{L}_G$ to a total order $\prec$.

\begin{thm}\label{res}
There exists a minimal multigraded free resolution $\mathbb{F}$ of
$H_{\mathcal{L}_G}$ such that for each $i\geq 0$, the free module
$\mathbb{F}_i$ has a basis with basis elements $b(p;S)$, where $p\in
\mathcal{L}_G$ and $S$ is a subset of the set of lower neighbors
$N(p)$ of $p$ with $|S|=i$ and multidegree of $b(p;S)$ is the least
common multiple of $u_p$ and all $u_q$ with $q\in S$.
\end{thm}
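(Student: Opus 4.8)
The plan is to build $\mathbb{F}$ by an iterated mapping cone along the total order $\prec$, using Lemma~\ref{lem} precisely to control minimality. Write $\mathcal{L}_G=\{p_1\prec p_2\prec\cdots\prec p_r\}$ with $\prec$ refining rank, and set $I_k=(u_{p_1},\ldots,u_{p_k})$, so that $I_0=0$, $I_r=H_{\mathcal{L}_G}$, and each step is governed by the short exact sequence
\[
0\To R/(I_{k-1}:u_{p_k})\xrightarrow{\;\cdot u_{p_k}\;} R/I_{k-1}\To R/I_k\To 0 .
\]
First I would compute the colon ideals. A direct monomial calculation gives $u_{p_j}:u_{p_k}=X_{p_j\setminus p_k}\,Y_{p_k\setminus p_j}$, and since the lower neighbors of $p_k$ have strictly smaller rank they all occur among $p_1,\ldots,p_{k-1}$; for $q\in N(p_k)$ one has $q\subsetneq p_k$, so the associated generator is the pure $y$-monomial $Y_{p_k\setminus q}$. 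I claim $(I_{k-1}:u_{p_k})$ is minimally generated by $\{Y_{p_k\setminus q}:q\in N(p_k)\}$: each $X_{p_j\setminus p_k}Y_{p_k\setminus p_j}$ is divisible by $Y_{p_k\setminus q}$ for any lower neighbor $q\supseteq p_j\wedge p_k$ of $p_k$ (such a $q$ exists in $\mathcal{L}_G$ since $\mathcal{L}_G$ is closed under $\wedge$ and $j<k$ forces $p_j\wedge p_k<p_k$), while distinct lower neighbors are incomparable, so no $Y_{p_k\setminus q}$ divides another.

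The crucial step is to resolve this colon ideal minimally. Its generators $Y_{p_k\setminus q}$, $q\in N(p_k)$, involve only $y$-variables, and the least common multiple over a subset $S\subseteq N(p_k)$ equals $Y_{p_k\setminus\wedge S}$, where $\wedge S=\bigcap_{q\in S}q$. The Taylor complex on these monomials has, in homological degree $i$, a basis indexed by the $i$-subsets $S$ of $N(p_k)$ with multidegree $Y_{p_k\setminus\wedge S}$; it is minimal exactly when these least common multiples are pairwise distinct, i.e.\ when $\wedge S\neq\wedge S'$ for $S\neq S'$. This is precisely Lemma~\ref{lem}. Hence the Taylor complex \emph{is} the minimal free resolution $\mathbb{G}^{(k)}$ of $R/(I_{k-1}:u_{p_k})$, with basis indexed by the subsets of $N(p_k)$.

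Then I would assemble $\mathbb{F}$ as the iterated mapping cone of comparison maps $\mathbb{G}^{(k)}\to\mathbb{F}^{(k-1)}$ lifting $\cdot u_{p_k}$, where $\mathbb{F}^{(k-1)}$ is the inductively constructed resolution of $R/I_{k-1}$. After the degree shift accounting for $\cdot u_{p_k}$, the contribution of $\mathbb{G}^{(k)}$ becomes exactly the basis elements $b(p_k;S)$, $S\subseteq N(p_k)$, sitting in homological degree $|S|$ of the resolution of the ideal, with multidegree $u_{p_k}\cdot Y_{p_k\setminus\wedge S}=X_{p_k}Y_{[n]\setminus\wedge S}=\lcm(u_{p_k},\{u_q:q\in S\})$, as claimed. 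Running $k=1,\ldots,r$ then produces the asserted module structure of $\mathbb{F}$, and the differentials promised in Theorem~\ref{dif} can be read off from the Taylor differentials together with the comparison maps.

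\textbf{Main obstacle.} The one point needing care is that each mapping cone stays \emph{minimal}, i.e.\ the comparison map has all entries in $\mm$. Here I would exploit that the $x$-part of the multidegree of any basis element $b(p;S)$ is exactly $X_p$, so it pins down $p$. A new generator $b(p_k;S)$ can map only to old basis elements $b(p_j;S')$ with $p_j\prec p_k$; were some entry a unit, the two multidegrees would coincide, forcing $X_{p_k}=X_{p_j}$ and hence $p_k=p_j$, a contradiction. Thus every entry of every comparison map is a non-unit monomial, and together with the internal minimality of each $\mathbb{G}^{(k)}$ (from Lemma~\ref{lem}) and of $\mathbb{F}^{(k-1)}$ (induction), the full cone is minimal.
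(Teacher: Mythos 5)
Your proposal is correct and follows essentially the same route as the paper: an iterated mapping cone along the total order $\prec$ refining rank, with the colon ideal $(I_{k-1}:u_{p_k})$ identified as minimally generated by the pure $y$-monomials $Y_{p_k\setminus q}$, $q\in N(p_k)$, resolved by its Taylor complex, whose minimality is exactly what Lemma~\ref{lem} guarantees. Your bookkeeping for minimality of the full cone is in fact slightly cleaner than the paper's case analysis --- using that the new-to-new block is literally the Taylor differential and that the $x$-part $X_p$ of $\multideg b(p;S)$ pins down $p$, so comparison-map entries cannot be units --- but this is the same underlying observation as the paper's coefficient computation ($Y_A\neq 1$ within a fixed $p$, and $X_{p\setminus q}Y_B\neq 1$ across distinct lattice elements).
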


\begin{proof}
{The construction of resolution is as in the proof of
\cite[Theorem~2.1]{HHZ} by mapping cone. For any $p\in
\mathcal{L}_G$ we construct inductively a complex $\mathbb{F}(p)$
which is a multigraded free resolution of the ideal
$H_{\mathcal{L}_G}(p)=\langle u_q ; q\preceq p\rangle$. The complex
$\mathbb{F}(\hat{0})$ is defined as $\mathbb{F}_i(\hat{0})=0$ for
$i>0$ and $\mathbb{F}_0(\hat{0})=S$. Now, let $p\in \mathcal{L}_G$
and $q\in \mathcal{L}_G$, $q\prec p$ be the element preceding $p$.
Then $H_{\mathcal{L}_G}(p)=(H_{\mathcal{L}_G}(q),u_p)$, and hence we
have the exact sequence of multigraded $S$-modules
$$0\longrightarrow (S/L)(-\multideg u_p)\longrightarrow
S/H_{\mathcal{L}_G}(q)\longrightarrow
S/H_{\mathcal{L}_G}(p)\longrightarrow 0,$$ where $L$ is the colon
ideal $H_{\mathcal{L}_G}(q):u_p$. Let $u_{q'}/[u_{q'},u_p]\in L$,
where $q'<q$ and let $t\in [q'\wedge q,p]\cap N(p)$. Then
$u_t/[u_t,u_p] $ divides $u_{q'\wedge p}/[u_{q'\wedge p},u_p]$ and
$u_{q'\wedge p}/[u_{q'\wedge p},u_p]$ divides $ u_q'/[u_q',u_p]$.
Therefore we have $L=(\{u_t/[u_t,u_p]\}_{t\in N(p)})$.

Let $\mathbb{T}$ be the Taylor complex associated with the sequence
$u_t/[u_t,u_p]$, $t\in N(p)$, where the order of the sequence is
given by the order $\prec$ on the elements of $\mathbb{L}$. Then
${T}_i$ has a basis with elements $e_{t_1}\wedge e_{t_2}\wedge\cdots
\wedge e_{t_i}$, where $t_1\prec t_2\prec\cdots\prec t_i$. The
multidegree of $e_{t_1}\wedge e_{t_2}\wedge\cdots \wedge e_{t_i}$ is
the least common multiple of the elements $u_{t_j}/[u_{t_j},u_p]$
for $j=1,\ldots,i$. The shifted complex $\mathbb{T}(-\multideg
(u_p))$ is a multigraded free resolution of $(S/L)(-\multideg
(u_p))$. Let $b(p;t_1,\ldots,t_i)$ be basis element of
$\mathbb{T}_i(-\multideg (u_p))$ corresponding to $e_{t_1}\wedge
e_{t_2}\wedge\cdots \wedge e_{t_i}$. Then
multideg$(b(p;t_1,\ldots,t_i))$=multideg$(u_p)+$multideg
$(e_{t_1}\wedge e_{t_2}\wedge\cdots \wedge
e_{t_i})=lcm(u_p,u_{t_1},\\\ldots,u_{t_i})$. This resolution is
minimal since for any $t_1\prec t_2\prec \cdots\prec t_i$ we have
$lcm(f_1,\ldots,f_i)/lcm(f_1,\ldots,\check{f}_j,\ldots,f_i)=Y_{p\setminus
\bigwedge_{l=1}^i t_l}/Y_{p\setminus \bigwedge_{l=1,l\neq j}^i
t_l}\neq 1$.

The monomorphism $(S/L)(-\multideg (u_p))\longrightarrow
S/H_{\mathcal{L}_G}(q)$ induces a comparison map
$\alpha:\mathbb{T}(-\multideg (u_p))\longrightarrow \mathbb{F}(q)$
of multigraded complexes. Let $\mathbb{F}(p)$ be the mapping cone of
$\alpha$. Then $\mathbb{F}(p)$ is a multigraded free $S$-resolution
of $H_{\mathcal{L}_G}(p)$ with the desired multigraded basis.

We claim that this resolution is minimal. For any two basis elements
$b(p;S)$ and $b(q;T)$ with $|T|=|S|-1$. We show that the coefficient
of $b(q;T)$ in $\partial b(p;S)$ is either zero or a monomial $\neq
1$. First assume that $p=q$. If $T\subseteq S$, then  the
coefficient is multideg$(b(p;S))/$ multideg$(b(p,T))=Y_A$, where
$A=\wedge\{r; r\in T\}\setminus \wedge\{r; r\in S\}$. Since $A$ is a
nonempty set by Lemma~\ref{lem}, then $Y_A\neq 1$. If $T\nsubseteq S$ and
multideg$(b(p;T))$ divides multideg$(b(p;S))$, then $\wedge\{r; r\in
S\}\leq \wedge\{r; r\in T\}$.  Therefore $\wedge\{r; r\in
N(p)\}=\wedge\{r; r\in N(p)\setminus (T\setminus S)\}$, which is a
contradiction by Lemma\ref{lem}. Now, assume that $q<p$. If  multideg$(b(q;T))$ divides
multideg$(b(p;S))$, then the coefficient is $X_{p\setminus q}Y_B$ for
some set $B\subseteq [n]$ and so it is not $1$. In the remaining
case $q\nless p$, multideg$(b(q;T))$ does not divide multideg$(b(p;S))$. }
\end{proof}

In the following theorem we describe the maps in the resolution of Theorem~\ref{res}.
These information is just for curiosity and is not needed to get the results which come after.
Before describing the maps in the resolution we fix some notation.
For a subset $S\subset \mathcal{L}_G$ and $q\in S$ let
$\sigma(q;S)=|\{r\in S; r\prec q\}|$. Let $X_{p\setminus
q}=\prod_{i\in p\setminus q}x_i$ and $Y_{S_q}=\prod_{i\in
(\bigwedge_{r\in S\setminus \{q\}} r)\setminus q}y_i$.

\begin{thm}\label{dif}
For any $p\in \mathcal{L}_G$ and $S\subseteq N(p)$ we have
$$\partial(b(p;S))=\sum_{q\in S}(-1)^{\sigma(q;S)}(Y_{S_q}b(p;S\setminus \{q\})-
X_{p\setminus q}b(q;q\wedge (S\setminus \{q\}))).$$
 \end{thm}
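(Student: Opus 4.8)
The plan is to prove the formula by induction along the total order $\prec$, mirroring the inductive mapping-cone construction of $\mathbb{F}=\mathbb{F}(\hat 1)$ from the proof of Theorem~\ref{res}. Let $q\prec p$ be the predecessor of $p$, so that $\mathbb{F}(p)$ is the mapping cone of the comparison map $\alpha\colon\mathbb{T}(-\multideg u_p)\to\mathbb{F}(q)$ attached to the monomorphism $(S/L)(-\multideg u_p)\to S/H_{\mathcal{L}_G}(q)$. On the basis elements $b(q';T)$ inherited from $\mathbb{F}(q)$ the differential is unchanged, so the formula holds for them by the induction hypothesis; it therefore suffices to compute $\partial\,b(p;S)$ on the new basis elements. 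The mapping cone splits this differential into an \emph{internal} contribution, coming from the shifted Taylor differential of $\mathbb{T}(-\multideg u_p)$ and involving only the elements $b(p;S\setminus\{q\})$, and a \emph{comparison} contribution $\alpha(b(p;S))$ lying in $\mathbb{F}(q)$ and hence spanned by elements $b(q';T)$ with $q'\preceq q$.

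The internal contribution is read off directly from the proof of Theorem~\ref{res}. There the ratio of consecutive least common multiples along the Taylor complex was computed to be $Y_{(\bigwedge_{r\in S\setminus\{q\}}r)\setminus q}=Y_{S_q}$, and the Koszul sign attached to deleting $q$ from the ordered set $S$ is $(-1)^{\sigma(q;S)}$; together these yield the terms $(-1)^{\sigma(q;S)}Y_{S_q}\,b(p;S\setminus\{q\})$. For the comparison contribution I would specify $\alpha$ explicitly (up to the overall mapping-cone sign) by
$$\alpha(b(p;S))=\sum_{q\in S}(-1)^{\sigma(q;S)}X_{p\setminus q}\,b(q;q\wedge(S\setminus\{q\})).$$
Here, by distributivity of $\mathcal{L}_G$, for $q\in S\subseteq N(p)$ the elements $\{q\wedge r:r\in S\setminus\{q\}\}$ lie in $N(q)$, and by Lemma~\ref{lem} they are pairwise distinct, so $q\wedge(S\setminus\{q\})$ is an admissible index set of size $|S|-1$ and each $b(q;q\wedge(S\setminus\{q\}))$ is a genuine basis element in the correct homological degree. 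A multidegree computation then forces the coefficient: using $X_p=X_qX_{p\setminus q}$ for the cover $q\lessdot p$ together with the distributive identity $q\wedge\bigwedge_{r\in S\setminus\{q\}}r=\bigwedge_{r\in S}r$ (which makes the $y$-degrees of $b(p;S)$ and $b(q;q\wedge(S\setminus\{q\}))$ coincide), one finds $\multideg b(p;S)=X_{p\setminus q}\cdot\multideg b(q;q\wedge(S\setminus\{q\}))$, so by minimality the coefficient must be exactly the monomial $X_{p\setminus q}$ and no other basis element can occur.

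The main work, and the step I expect to be the principal obstacle, is checking that this explicit $\alpha$ really is a comparison map, i.e. that $\partial^{\mathbb{F}(q)}\circ\alpha=\alpha\circ\partial^{\mathbb{T}}$ and that it lifts the monomorphism above. Expanding both composites with the inductive formula for $\partial^{\mathbb{F}(q)}$ and the Taylor differential produces double sums indexed by ordered pairs from $S$; matching them rests on the distributive identities $q\wedge(q'\wedge r)=q'\wedge(q\wedge r)$ to identify the resulting index sets and on the fact that the $X$- and $Y$-monomials involve disjoint sets of variables, while the sign cancellation reduces to comparing $\sigma(\cdot\,;S)$ with its analogues on the contracted index sets—the same bookkeeping that underlies $\partial^2=0$. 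The lifting (augmentation) condition is the case $S=\{q\}$, where the formula yields $\partial\,b(p;\{q\})=Y_{p\setminus q}\,b(p;\emptyset)-X_{p\setminus q}\,b(q;\emptyset)$, and one checks directly that $Y_{p\setminus q}u_p=X_{p\setminus q}u_q$, so this is indeed a first syzygy lifting $u_p$. Once $\alpha$ is confirmed to be a comparison map, forming the mapping cone combines the internal and comparison contributions, and a final reconciliation of the mapping-cone sign conventions reproduces the signs displayed in the statement, closing the induction.
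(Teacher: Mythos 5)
Your proposal follows essentially the same route as the paper's own proof: the paper likewise first checks that $q\wedge(S\setminus\{q\})\subseteq N(q)$ and that the displayed formula is multihomogeneous, then chooses $\alpha$ by exactly your formula (absorbed into the mapping-cone sign $(-1)^i$), and verifies $\partial^{\mathbb{F}(q)}\circ\alpha=\alpha\circ\partial^{\mathbb{T}}$ by the same double-sum expansion, using the identities $q\setminus(q\wedge q')=p\setminus q'$ and $Y_{(q\wedge S)_{q\wedge q'}}=Y_{S_{q'}}$ and matching the two sums after exchanging $q$ and $q'$. Your additional verification of the augmentation condition in the case $S=\{q\}$ (via $Y_{p\setminus q}u_p=X_{p\setminus q}u_q$) is a point the paper leaves implicit, but it does not constitute a different approach.
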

\begin{proof}{
First we remind that for any $q_i$ and $q_j$ in $N(p)$ we have $q_i\wedge q_j\in
N(q_i)$, since $[q_i\wedge q_j,p]$ is isomorphic to a Boolean
lattice of rank two. Then we have $q\wedge (S\setminus \{q\})\subseteq N(q)$.
We have
multideg$(b(p;S))=X_pY_{[n]\setminus (\bigwedge_{r\in S} r)}$ and
multideg$(b(p;S\setminus \{q\}))=X_pY_{[n]\setminus (\bigwedge_{r\in
S\setminus q} r)}$. Also,
$\{i;i\notin \bigwedge_{r\in S} r\}\setminus \{ i;i\notin \bigwedge_{r\in S\setminus q} r\}=
\{i;i\in \bigwedge_{r\in S\setminus q} r\setminus q\}$ shows that
$Y_{[n]\setminus (\bigwedge_{r\in S}
r)}/Y_{[n]\setminus (\bigwedge_{r\in S\setminus q} r)}=Y_{S_q}$ and so multideg$(b(q;q\wedge (S\setminus \{q\})))=X_qY_{[n]\setminus \bigwedge_{r\in S\setminus q}
(q\wedge r)}=X_qY_{[n]\setminus \bigwedge_{r\in S}
(r)}=$multideg$(b(p;S))/X_{p\setminus q}$.
Therefore, $\partial$ is a multi-homogeneous differential.
Consider the mapping cone constructed in Theorem~\ref{res}. The differential given by
that mapping cone is
$$\partial_i=(\partial_i^{\mathbb{T}}+(-1)^i \alpha_i,\partial_{i+1}^{\mathbb{F}(q)})
{\rm \ for\  all\ } i.$$

Comparing this equation with $\partial$ which
is defined in theorem it is enough to show that for any $S\subseteq N(p)$ we have:

\begin{itemize}

\item[(i)] $ \partial^{\mathbb{T}}(b(p;S))=
\sum_{q\in  S}(-1)^{\sigma(q,S)}Y_{S_q} b(p;S\setminus\{q\}),$
which is exactly the definition of Taylor complex.

\item[(ii)]
 We can choose $\alpha$ such that
$$(-1)^i \alpha_i(b(p;S))=-\sum_{q\in S}(-1)^{\sigma(q,S)}X_{p\setminus q} b(q;q\wedge S\setminus\{q\}).$$
\end{itemize}

We show that $\alpha:\mathbb{C}\rightarrow \mathbb{F}(q)$ is a complex homomorphism. So it
is enough to show that for any $b(p;S)\in T_i$
$$\partial_i^{F(q)}\alpha_i(b(p;S))=
\alpha_{i-1}\partial_i^{\mathbb{T}}(b(p;S)).$$
First we compute the left side of the above equation. Then we have
\\
\\
$(1)\ \ \partial_i^{F(q)}\alpha_i(b(p;s))=(-1)^{i+1}\sum_{q\in S}(-1)^{\sigma(q;S)}X_{p\setminus q}
\partial_i^{F(q)}(b(q;q\wedge (S\setminus \{q\}))).$
\\
\\
By induction hypothesis we have:
\\
\begin{eqnarray*} (2)\ \ \partial_i^{F(q)}(b(q;q\wedge (S\setminus \{q\})))
&=&\sum_{q'\in S\setminus q}(-1)^
{\sigma(q';S\setminus\{q\})}Y_{(q\wedge S)_{q\wedge q'}} b(q;q\wedge (S\setminus \{q,q'\}))
\\\\
&-&X_{q\setminus (q\wedge q')} b(q\wedge q';q'\wedge [(q\wedge (S\setminus \{q\})
\setminus \{q,q'\}])).
\end{eqnarray*}
\\
Since $q\vee q'=p$ for any $q,q'\in N(p)$ we have $q\setminus (q\wedge q')=p\setminus q'$.
Also from
$\bigwedge_{r\in S\setminus\{q,q'\}}\setminus \{q,q'\}=
\bigwedge_{r\in S\setminus\{q'\}}\setminus \{q,q'\}=
\bigwedge_{r\in S\setminus\{q'\}}\setminus \{q'\}$ we see that
$Y_{(q\wedge S)_{q\wedge q'}}=Y_{S_{q'}}$.
Considering these statements and substituting $(2)$ in $(1)$ we get
\\
\\
$
(3)\ \ \partial_i^{F(q)}\alpha_i(b(p;s))=\\\\(-1)^{i+1}\sum_{q,q'\in S,q\neq q'}
(-1)^{\sigma(q;S)+\sigma(q';S\setminus \{q\})} X_{p\setminus q} Y_{S_{q'}}
b(q;q\wedge (S\setminus \{q,q'\})).
$
\\
\\
On the other hand we have
\\
\\
$
(4)\ \ \alpha_{i-1}\partial_i^{\mathbb{T}}(b(p;S))=(-1)^{\sigma(q;S)} Y_{S_{q}}\alpha_{i-1}
(b(p;S\setminus\{q\}))$
\\

After substituting $ \alpha_{i-1}(b(p;S\setminus\{q\})) $ in $(4)$ and using
$q\setminus (q\wedge q')=p\setminus q'$ we get
\\
\\
$
(5)\ \ \alpha_{i-1}\partial_i^{\mathbb{T}}(b(p;S))=\\\\(-1)^{i+1}\sum_{q,q'\in S,q\neq q'}
(-1)^{\sigma(q;S)+\sigma(q';S\setminus \{q\})}Y_{S_{q}} X_{p\setminus q'}
b(q';q'\wedge (S\setminus \{q,q'\})).
$
\\

After exchanging $q$ and $q'$ we see that the equations $(3)$ and
$(5)$ are equal, which completes the proof. }
\end{proof}

The next observation is of crucial importance for understanding the
$i$-extremal and extremal Betti-numbers of $H_{\mathcal{L}_G}$ and
$I(G)$.

\begin{prop}\label{3}
There is a correspondence between the basis elements
$b(p;S)$ and intervals in $\mathcal{L}_G$, which are isomorphic to a
Boolean lattice.

\end{prop}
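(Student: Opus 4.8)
The plan is to set up an explicit bijection between the basis elements $b(p;S)$ from Theorem~\ref{res} and the intervals of $\mathcal{L}_G$ that are isomorphic to a Boolean lattice. Recall that $b(p;S)$ is indexed by a pair consisting of $p\in\mathcal{L}_G$ together with a subset $S\subseteq N(p)$ of the lower neighbors of $p$. To a given pair $(p,S)$ I would associate the interval $[\wedge\{q;q\in S\},\,p]$ in $\mathcal{L}_G$, where by convention the empty subset $S=\emptyset$ yields the degenerate interval $[p,p]=\{p\}$ corresponding to the trivial (rank-zero) Boolean lattice. The first step is therefore to define this assignment and to argue that the resulting interval is in fact Boolean.

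Next I would verify that $[\wedge\{q;q\in S\},p]$ is isomorphic to a Boolean lattice of rank $|S|$. Since $\mathcal{L}_G$ is distributive, each interval is itself distributive, so it suffices to show that the interval is \emph{complemented} and that the $|S|$ elements of $S$ are exactly its atoms when viewed relative to the bottom element $\wedge\{q;q\in S\}$. Here the key input is the remark stated immediately after Lemma~\ref{lem}: for subsets $T\subseteq T'$ of $N(p)$ one has $|T'|-|T|\le |\wedge\{q;q\in T\}|-|\wedge\{q;q\in T'\}|$. Applying Lemma~\ref{lem} itself, the map $T\mapsto \wedge\{q;q\in T\}$ on subsets of $S$ is injective, and by distributivity it respects meets and joins; this exhibits an order-reversing bijection from the Boolean lattice $2^S$ onto the interval, so the interval is Boolean of the asserted rank $|S|$. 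In particular the multidegree of $b(p;S)$, which is $\mathrm{lcm}(u_p,\{u_q\}_{q\in S})=X_pY_{[n]\setminus\wedge\{q;q\in S\}}$, is read off directly from the two endpoints of the interval.

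The main obstacle will be establishing surjectivity and injectivity of this correspondence, i.e.\ that every Boolean interval of $\mathcal{L}_G$ arises from exactly one such pair $(p,S)$. Given a Boolean interval $[a,p]$ with top element $p$, I would recover $S$ as the set of atoms of $[a,p]$, i.e.\ the lower neighbors of $p$ lying in the interval; one must check these are genuine lower neighbors of $p$ in all of $\mathcal{L}_G$ (not merely within the interval) and that $\wedge\{q;q\in S\}=a$ recovers the bottom. The fact used in the proof of Theorem~\ref{dif} — that for $q_i,q_j\in N(p)$ the interval $[q_i\wedge q_j,p]$ is Boolean of rank two — is the local model that makes the atoms well-behaved, and the injectivity of $T\mapsto \wedge\{q;q\in T\}$ from Lemma~\ref{lem} guarantees that distinct $S$ give distinct intervals with the same top $p$, while distinct tops obviously give distinct intervals.

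Putting these together yields the desired correspondence. I expect the verification that the atoms of a Boolean interval coincide with its top element's lower neighbors, and that meets of subsets of these atoms stay inside $\mathcal{L}_G$ and reconstruct the interval, to require the most care; everything else follows formally from distributivity of $\mathcal{L}_G$ together with Lemma~\ref{lem} and its corollary on ranks. Once the bijection is in place, the statement of Proposition~\ref{3} is immediate, and it sets up the later identification of extremal Betti numbers with \emph{maximal} Boolean sublattices of $\mathcal{L}_G$.
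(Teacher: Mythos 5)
Your correspondence is exactly the paper's: you assign to $b(p;S)$ the interval $[\wedge\{q;\,q\in S\},\,p]$, get injectivity from Lemma~\ref{lem} (same top plus Lemma~\ref{lem} for equal tops, distinct tops trivially), and recover $S$ for surjectivity as the lower neighbors of the top element lying inside a given Boolean interval (the paper writes $S=N(I)\wedge[J,I]$). Two small points there: these elements are the \emph{coatoms} of the interval, not its atoms (your parenthetical shows you mean the right set, but the terminology slips twice); and the check you flag -- that a coatom $c$ of $[J,I]$ is a genuine lower neighbor of $I$ in all of $\mathcal{L}_G$ -- is a one-liner, since any $w\in\mathcal{L}_G$ with $c<w<I$ satisfies $w\geq c\geq J$ and hence already lies in $[J,I]$.

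The genuine gap is at the central step, which you assert rather than prove: that $T\mapsto\wedge\{q;\,q\in T\}$ is a bijection from $2^S$ \emph{onto} the interval. Lemma~\ref{lem}, distributivity, and the fact that $q\vee q'=p$ for distinct $q,q'\in N(p)$ give injectivity and that this map carries unions to meets and intersections to joins; but that only embeds a Boolean lattice of rank $|S|$ into $[\wedge\{q;\,q\in S\},p]$ with the correct endpoints. A priori the interval could contain further elements of $\mathcal{L}_G$ not of the form $\wedge T$, and then it would not be Boolean; your alternative route via complementedness is equally unverified and amounts to the same point. This verification is the entire content of the paper's proof: given $I$ in the interval, let $q_{i_1},\dots,q_{i_k}$ be the elements of $S$ above $I$; if there were $x\in\wedge\{q_{i_j}\}\setminus I$, then since $\wedge\{q;\,q\in S\}\leq I$ some $q_l\in S$ with $l\neq i_1,\dots,i_k$ satisfies $x\notin q_l$, and since $I\nleq q_l$ and $q_l\in N(p)$ one has $I\vee q_l=p$, yet $x\notin I\cup q_l=p$, contradicting $x\in q_{i_1}\subseteq p$; hence $I=\wedge\{q_{i_j};\,1\leq j\leq k\}$. (Abstractly: $I=I\vee(\wedge\{q;\,q\in S\})=\wedge_{q\in S}(I\vee q)=\wedge\{q\in S;\,I\leq q\}$ by distributivity, since $I\vee q=p$ whenever $I\nleq q$.) The same fact is what guarantees, in your surjectivity step, that the meet of the recovered coatoms is the bottom $J$ and that the pair $(p,S)$ reconstructs the whole interval. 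Since you explicitly defer this as ``requiring the most care'' without supplying the argument, the proposal as written does not establish that the assigned intervals are Boolean, which is the heart of the proposition.
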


\begin{proof}
{For any basis element $b(p;S)$, we show that the interval
$[\wedge\{q; q\in S\},p]$ is isomorphic to $B_{|S|}$. Let
$S=\{q_1,\ldots,q_n\}$ and $v_i=\wedge\{q; q\in S\setminus
\{q_i\}\}$. The interval $[\wedge\{q; q\in S\},p]$ is a Boolean
lattice on $v_1,\ldots,v_n$.

For any $I\in [\wedge\{q; q\in S\},p]$, let $i_1,\ldots,i_k$ be the
indices that $I\leq q_{i_j}$, $1\leq j\leq k$. Then $I\leq
\wedge\{q_{i_j}; 1\leq j\leq k\}$. If $I\neq \wedge\{q_{i_j}; 1\leq
j\leq k\}$, then there exists $x\in \wedge\{q_{i_j}; 1\leq j\leq
k\}\setminus I$. Since $\wedge\{q; q\in S\}\leq I$, there exists
$1\leq l\leq n$, $l\neq i_1,\ldots,i_k$ such that $x\notin q_l$. But
then $x\notin I\vee q_l=p$, a contradiction. Thus $I=
\wedge\{q_{i_j}; 1\leq j\leq k\}=\vee \{v_j; 1\leq j\leq n, j\neq
i_1,\ldots,i_k\}$. Let $\Phi$ be a function from the set of basis
elements to intervals in $\mathcal{L}_G$, which are isomorphic to a
Boolean lattice such that $\Phi(b(p;S))=[\wedge\{q; q\in S\},p]$.
From Lemma~\ref{lem} we know that $\Phi$ is a monomorphism. For any
interval $[J,I]$ in $\mathcal{L}_G$ isomorphic to a Boolean lattice,
set $S=N(I)\wedge [J,I]$. Then $[J,I]=[\wedge\{q; q\in S\},I]$ and
$\Phi$ is surjective. } \end{proof}

In the following we denote by  $A_G$ the set of  elements $p\in \mathcal{L}_G$ such that the
interval $[\wedge\{r; r\in N(p)\},p]$ is isomorphic to a maximal
Boolean lattice in $\mathcal{L}_G$.

\begin{lem}\label{lem2}
Let $p\in A_G$. For any $q\in \mathcal{L}_G$ such that $[\wedge\{r;
r\in N(q)\},q]\subseteq [\wedge\{r; r\in N(p)\},p]$, we have
$$|q|-|N(q)|-|\wedge\{r; r\in N(q)\}|\leq |p|-|N(p)|-|\wedge\{r;
r\in N(p)\}|.$$
\end{lem}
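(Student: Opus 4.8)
The plan is to compare the two quantities by tracking how each term changes as one passes from the sublattice interval $[\wedge\{r; r\in N(q)\},q]$ up to the maximal interval $[\wedge\{r; r\in N(p)\},p]$. By Proposition~\ref{3}, both intervals are Boolean lattices, so $|N(q)|$ equals the rank of the first interval and $|N(p)|$ equals the rank of the second. Since the first interval is contained in the second and $p\in A_G$ makes the second maximal, the containment forces $|N(q)|\le|N(p)|$. The natural reformulation is therefore to rewrite the claimed inequality as
\begin{equation*}
|p|-|q|\ \ge\ \bigl(|N(p)|-|N(q)|\bigr)+\bigl(|\wedge\{r; r\in N(p)\}|-|\wedge\{r; r\in N(q)\}|\bigr),
\end{equation*}
and to bound the right-hand side using the cardinality estimates already available.

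First I would set $J=\wedge\{r; r\in N(q)\}$, $J'=\wedge\{r; r\in N(p)\}$, and use the containment $[J,q]\subseteq[J',p]$ to record that $J'\le J\le q\le p$ in $\mathcal{L}_G$. Since these are subsets of $[n]$, the three differences $|q|-|J|$, $|p|-|J'|$ equal the ranks $|N(q)|$ and $|N(p)|$ of the respective Boolean intervals; this is precisely the content extracted from Proposition~\ref{3}. Next I would invoke the cardinality consequence of Lemma~\ref{lem} stated in the text: for $S\subseteq S'\subseteq N(p)$ one has $|S'|-|S|\le |\wedge\{r; r\in S\}|-|\wedge\{r; r\in S'\}|$. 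Applying this with the full $N(p)$ against a suitable subset corresponding to $N(q)$ controls exactly the discrepancy between the two meet-sizes $|J|-|J'|$ in terms of $|N(p)|-|N(q)|$. Combining the rank identities with this meet-size estimate should collapse the desired inequality into a statement purely about $|p|-|q|$ versus the growth in rank.

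The key chain of equalities I expect to use is $|q|-|N(q)|=|J|$ and $|p|-|N(p)|=|J'|$, turning the target inequality into the transparent claim $|J|-|J'|\le |J|-|J'|$ after the correct substitution — provided the meet-size bound is applied to the right pair of subsets. The main obstacle will be identifying which subset $S\subseteq N(p)$ plays the role of "$N(q)$" inside $N(p)$: the lower neighbors of $q$ need not literally be lower neighbors of $p$, so I must use the Boolean structure of $[J',p]$ to realize the atoms of $[J,q]$ as meets of atoms of $[J',p]$ and thereby match $|N(q)|$ with a cardinality of a genuine subset of $N(p)$. Once that identification is pinned down via the explicit description of atoms given in the proof of Proposition~\ref{3} (the elements $v_i=\wedge\{q; q\in S\setminus\{q_i\}\}$), Lemma~\ref{lem} applies directly and the inequality follows from a short comparison of cardinalities, with no delicate cancellation remaining.
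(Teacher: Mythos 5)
Your ``key chain of equalities'' $|q|-|N(q)|=|\wedge\{r;\ r\in N(q)\}|$ and $|p|-|N(p)|=|\wedge\{r;\ r\in N(p)\}|$ is false, and the proposal collapses without it. The rank $|N(p)|$ of the Boolean interval $[\wedge\{r;\ r\in N(p)\},p]$ is \emph{not} the cardinality difference of its endpoints: covering relations in $\mathcal{L}_G$ are covers in the sublattice, not in $B_n$, and a single cover can add several ground-set elements (in Figure~1 of the paper, $\emptyset\lessdot\{1,2\}$). Concretely, the atoms $v_1,\dots,v_n$ of the interval satisfy $v_i\wedge v_j=\wedge\{r;\ r\in N(p)\}$, so the sets $v_i\setminus\wedge\{r;\ r\in N(p)\}$ are pairwise disjoint and nonempty; this yields only the inequality $|p|-|\wedge\{r;\ r\in N(p)\}|\geq |N(p)|$, with equality exactly when each atom adds a single vertex. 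If your identities held, the lemma would trivialize to $0\leq 0$, the quantity $f(p)=|p|-|N(p)|-|\wedge\{r;\ r\in N(p)\}|$ would vanish identically, and Corollary~\ref{cor1} would force $\reg(H_{\mathcal{L}_G})=n$ for every unmixed bipartite graph --- contradicted by the paper's own example, where $f(\{1,2,3\})=1$. Taking $q=p$ (which satisfies the hypothesis of the lemma) already exhibits the contradiction.

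The actual content of the lemma, which your proposal never isolates, is the estimate $|p|-|q|\geq |N(p)|-|N(q)|$; once this is available, the rearranged inequality follows because the interval containment gives $\wedge\{r;\ r\in N(p)\}\leq \wedge\{r;\ r\in N(q)\}$ and hence $|\wedge\{r;\ r\in N(p)\}|\leq|\wedge\{r;\ r\in N(q)\}|$. The paper proves the estimate by writing $q=v_1\vee\cdots\vee v_m$ (whence $|N(q)|=m$, using that by the hypothesis every lower neighbor of $q$ lies in the interval, since $N(q)\subseteq[\wedge\{r;\ r\in N(q)\},q]$) and showing each atom $v_i$ owns a private vertex $x_i$ belonging to no other $v_j$: otherwise $v_i\leq\wedge\{r;\ r\in N(p)\}$, since $v_i\wedge v_j$ equals the bottom. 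The private vertices of $v_{m+1},\dots,v_n$ lie in $p\setminus q$, so $|p|-|q|\geq n-m$. Your instinct to deploy the stated consequence of Lemma~\ref{lem} can be made to work, but not where you aim it: the meet-size discrepancy it ``controls'' is harmless (here one even has $\wedge\{r;\ r\in N(q)\}=\wedge\{r;\ r\in N(p)\}$), and the needed bound on $|p|-|q|$ comes instead from noting that $q$ is the meet of the $n-m$ coatoms $w_j=\bigvee_{l\neq j}v_l$, $j=m+1,\dots,n$, which form a genuine subset $S\subseteq N(p)$; applying the consequence of Lemma~\ref{lem} to $\emptyset\subseteq S$ (with the convention that the empty meet is $p$) gives $|p|-|q|\geq |S|=n-m$. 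As written, however, your argument rests on false identities and so has a genuine gap.
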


\begin{proof}
Let $[\wedge\{r; r\in N(p)\},p]$ be a Boolean lattice on the
elements $v_1,\ldots,v_n$. Then $|N(p)|=n$, $|N(q)|\leq n$ and
$v_i\wedge v_j=\wedge\{r; r\in N(p)\}$ for any $1\leq i<j\leq n$.
Also $v_1\vee \cdots \vee v_n=p$. Without loss of generality assume
that $q=v_1\vee \cdots \vee v_m$ for some $m<n$. Then $|N(q)|=m$. We
claim that for any $1\leq i\leq n$, there exists an element $x_i\in
v_i$ such that $x_i$ is not in any other $v_j$, $1\leq j\leq n$.
Otherwise let $1\leq i\leq n$ be such that for any $x\in v_i$, there
exists $j\neq i$ with $x\in v_j$. It means that $v_i\leq \wedge\{r;
r\in N(p)\}$, a contradiction. Thus $|p|-|q|\geq n-m$. Since
$|\wedge\{r; r\in N(p)\}|\leq |\wedge\{r; r\in N(q)\}|$, we get the
inequality.
\end{proof}

As a first corollary we obtain

\begin{cor}\label{cor1}
Let $G$ be an unmixed bipartite graph on $(X,Y)$ such that
$|X|=|Y|=n$ and $A_G\subseteq \mathcal{L}_G$ be the set defined
above. Then
\begin{enumerate}
\item[{\em (i)}]  $\depth(R/I(G))=n-\max_{p\in A_G}\{|p|-|N(p)|-|\wedge\{r; r\in
N(p)\}|\}$;

\item[{\em (ii)}] $\reg(R/I(G))=\max_{p\in \mathcal{L}_G}|N(p)|.$
\end{enumerate}

\end{cor}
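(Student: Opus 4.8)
The plan is to obtain both invariants by transporting the multigraded extremal Betti-number data for $H_{\mathcal{L}_G}$, which we control completely by Proposition~\ref{3}, across the Alexander duality bridge of \cite[Theorem~2.8]{BCP}. Recall that $I(G)$ is the Alexander dual of $H_{\mathcal{L}_G}$, and the Bayer-Charalambous-Popescu theorem identifies the multigraded extremal Betti-numbers of a monomial ideal with those of its dual, swapping homological position and the codegree of the multidegree. Concretely, an extremal Betti-number $\beta_{i,\ab}(H_{\mathcal{L}_G})$ in homological degree $i$ with multidegree $\ab$ corresponds to an extremal Betti-number $\beta_{j,\bb}(I(G))$ with $j$ and $|\bb|$ read off from $i$ and $|\ab|$. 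So the whole argument reduces to reading the numerical invariants of the extremal basis elements $b(p;S)$ of $\mathbb{F}$ and then applying the dictionary.

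The first step is to pin down which basis elements give rise to extremal Betti-numbers of $H_{\mathcal{L}_G}$. By Proposition~\ref{3} the element $b(p;S)$ sits in homological degree $|S|$ and corresponds to the Boolean interval $[\wedge\{q;q\in S\},p]$; its multidegree is $\lcm(u_p,\{u_q:q\in S\}) = X_p\,Y_{[n]\setminus\wedge\{q;q\in S\}}$, of total degree $|p| + n - |\wedge\{q;q\in S\}|$. The largest homological degree in which Betti-numbers can be extremal is governed by the \emph{maximal} Boolean sublattices, which is exactly why $A_G$ was singled out: for $p\in A_G$ the interval $[\wedge\{r;r\in N(p)\},p]$ is a maximal Boolean lattice, giving a basis element $b(p;N(p))$ in homological degree $|N(p)|$ with multidegree of total degree $|p|+n-|\wedge\{r;r\in N(p)\}|$. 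Lemma~\ref{lem2} is the decisive technical input here: it guarantees that among all candidate elements whose Boolean interval is contained in a given maximal one, the quantity $|q|-|N(q)|-|\wedge\{r;r\in N(q)\}|$ is maximized at the top $p\in A_G$, which is precisely the comparison of homological degree against codegree needed to certify that the Betti-number carried by $b(p;N(p))$ is $i$-extremal.

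Next I would run the Alexander-duality translation twice. For the depth, use the Auslander-Buchsbaum-type reading: $\depth(R/I(G)) = 2n - \pd(R/I(G))$, and $\pd(R/I(G)) = 1 + \pd(I(G))$ is controlled by the top of the resolution of $I(G)$, which under duality is governed by the extremal element of $H_{\mathcal{L}_G}$ maximizing the codegree $n - (\text{total degree} - n) $-type expression. Tracking the bookkeeping, the homological position of the dual extremal Betti-number translates the quantity $\max_{p\in A_G}\{|p|-|N(p)|-|\wedge\{r;r\in N(p)\}|\}$ from Lemma~\ref{lem2} into the stated formula for $\depth$, giving part (i). For the regularity, recall that $\reg(R/I(G))$ under Alexander duality corresponds to the projective dimension of $H_{\mathcal{L}_G}$, i.e.\ the top homological degree in which $H_{\mathcal{L}_G}$ has a nonzero Betti-number; by Proposition~\ref{3} this is the largest size $|S|$ occurring, namely $\max_{p\in\mathcal{L}_G}|N(p)|$, yielding part (ii).

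The main obstacle is the correct bookkeeping in the BCP dictionary: one must be scrupulous about how homological degree, total degree, and the ambient dimension $2n$ enter the formula relating $\beta_{i,\ab}(H_{\mathcal{L}_G})$ to $\beta_{j,\bb}(I(G))$, and about the distinction between $i$-extremal and extremal Betti-numbers, since a careless count will shift the answer by a constant. The geometric inputs (Proposition~\ref{3} and Lemma~\ref{lem2}) already do the hard combinatorial work of identifying \emph{which} elements are extremal and \emph{which} quantity is maximized; what remains delicate is verifying that the extremality witnessed on the $H_{\mathcal{L}_G}$ side actually survives to the $I(G)$ side for the specific Betti-numbers that control $\depth$ and $\reg$, rather than being washed out by a larger competing extremal number elsewhere in the resolution.
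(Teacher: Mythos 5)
Your proposal follows essentially the same route as the paper's proof: read off $\deg b(p;S)=|p|+n-|\wedge\{r;\ r\in S\}|$ from Theorem~\ref{res}, use Lemma~\ref{lem} and Lemma~\ref{lem2} to conclude $\reg(H_{\mathcal{L}_G})=n+\max_{p\in A_G}\{|p|-|N(p)|-|\wedge\{r;\ r\in N(p)\}|\}$ and $\pd(H_{\mathcal{L}_G})=\max_{p\in\mathcal{L}_G}|N(p)|$, then transfer to $I(G)$ via Alexander duality and Auslander--Buchsbaum. The only difference is cosmetic: the paper invokes the duality $\reg(H_{\mathcal{L}_G})=\pd(R/I(G))$ directly (Terai's formula, recoverable from \cite[Theorem~2.8]{BCP}) rather than tracking extremal multigraded Betti-numbers across the BCP correspondence, so the ``bookkeeping'' and the survival-of-extremality concern you flag at the end are not actually needed for this corollary.
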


\begin{proof}
{For any basis element $b(p;S)$ in $\mathbb{F}_{|S|}$, one has
$\multideg(b(p;S))=X_pY_{[n]\setminus \wedge\{r; r\in S\}}$ and so
deg$(b(p;S))=|p|+n-|\wedge\{r; r\in S\}|$. For $S\subseteq N(p)$
from Lemma \ref{lem} we have $|\wedge\{r; r\in N(p)\}|+|N(p)|\leq
|\wedge\{r; r\in S)\}|+|S|$. Therefore by Lemma \ref{lem2} $${\rm
reg\ }(H_{\mathcal{L}_G})=n+\text{max}_{p\in
A_G}\{|p|-|N(p)|-|\wedge\{r; r\in N(p)\}|\}.$$ Since
$\reg(H_{\mathcal{L}_G})=\pd(R/I(G))$, from Auslander-Buchsbaum
formula one has $(i)$. The other statement is clear from
Theorem~\ref{res}. }
\end{proof}

Recall that a multigraded Betti-number $\beta_{i,b}$ is called
$i$-extremal if $\beta_{i,c}=0$ for all $c>b$, that is all
multigraded entries below $b$ on the $i$-th column vanish in the
Betti diagram as a Macaulay output [Mac]. Define $\beta_{i,b}$ to be
extremal if $\beta_{j,c}=0$ for all $j\geq i$, and $c>b$ so
$|c|-|b|\geq j-i$. In other words, $\beta_{i,b}$ corresponds to the
"top left corner" of a box of zeroes in the multigraded Betti
diagram. A graded Betti-number $\beta_{i,r}$ is called extremal if
$\beta_{j,l}=0$ for any $j\geq i$, $l>r$ and $l-r\geq j-i$. In other
words, $\beta_{i,r}$ corresponds to the "top left corner" of a box
of zeroes in the multigraded Betti diagram.

\begin{cor}
All multigraded Betti-numbers in homological degree $i$  are
$i$-extremal. A multigraded Betti-number $\beta_{i,b}$ of
$H_{\mathcal{L}_G}$ is extremal if and only if there exists $p\in
A_G$ such that $i=|N(p)|$ and $b=\multideg(b(p;N(p)))$.
\end{cor}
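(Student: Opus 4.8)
The plan is to prove the two assertions of this corollary by exploiting the explicit description of the multigraded resolution $\mathbb{F}$ from Theorem~\ref{res} together with the rank information from Lemma~\ref{lem} and the comparison of degrees established in Corollary~\ref{cor1}. Throughout I identify each nonzero multigraded Betti-number in homological degree $i$ with a basis element $b(p;S)$, where $p\in \mathcal{L}_G$ and $S\subseteq N(p)$ with $|S|=i$; this is legitimate because the resolution is minimal, so the distinct multidegrees occurring among the $b(p;S)$ are exactly the multigraded Betti-numbers. By the formula recorded in the proof of Corollary~\ref{cor1}, the multidegree of $b(p;S)$ is $X_pY_{[n]\setminus \wedge\{r;\,r\in S\}}$, whose total degree is $|p|+n-|\wedge\{r;\,r\in S\}|$.

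For the first assertion, that every multigraded Betti-number in homological degree $i$ is $i$-extremal, I would argue that two distinct basis elements $b(p;S)$ and $b(p';S')$ with $|S|=|S'|=i$ cannot have comparable multidegrees. Suppose $\multideg(b(p';S'))$ strictly divides $\multideg(b(p;S))$. Comparing the $x$-parts forces $p'\subseteq p$, and comparing the $y$-parts forces $\wedge\{r;\,r\in S\}\subseteq \wedge\{r;\,r\in S'\}$; since the two multidegrees are distinct at least one inclusion is proper. The key quantitative input is the inequality following Lemma~\ref{lem}: for $S\subseteq S'\subseteq N(p)$ one has $|S'|-|S|\le |\wedge\{r;\,r\in S\}|-|\wedge\{r;\,r\in S'\}|$, and more generally rank considerations in a Boolean interval control how the meet shrinks as one enlarges $S$. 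Because both homological degrees equal $i$, the cardinalities $|S|$ and $|S'|$ coincide, and I expect the rank-preserving behaviour encoded in Lemma~\ref{lem} to rule out a proper divisibility among multidegrees at the \emph{same} homological degree, so no entry lies strictly below another in the same column. This is precisely the $i$-extremality condition: $\beta_{i,c}=0$ for all $c>b$.

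For the second assertion I would characterize extremality directly from the definition: $\beta_{i,b}$ is extremal when $\beta_{j,c}=0$ for all $j\ge i$ and all $c>b$ with $|c|-|b|\ge j-i$. Writing $b=\multideg(b(p;S))$ with $|S|=i$, extremality says no basis element $b(p';S')$ with $|S'|=j\ge i$ can have a strictly larger multidegree $c$ satisfying $|c|-|b|\ge j-i$. Using $\deg(b(p';S'))=|p'|+n-|\wedge\{r;\,r\in S'\}|$, the total-degree gap is $|c|-|b| = (|p'|-|p|)-(|\wedge\{r;\,r\in S'\}|-|\wedge\{r;\,r\in S\}|)$, while the homological gap is $j-i=|S'|-|S|$. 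The condition $|c|-|b|\ge j-i$ then translates, after cancellation, into the inequality appearing in Lemma~\ref{lem2}, namely $|p|-|N(p)|-|\wedge\{r;\,r\in N(p)\}|\ge |p'|-|N(p')|-|\wedge\{r;\,r\in N(p')\}|$ when $S=N(p)$, $S'=N(p')$ and the interval of $q$ sits inside that of $p$. By Proposition~\ref{3}, a basis element $b(p;N(p))$ corresponds to the Boolean interval $[\wedge\{r;\,r\in N(p)\},p]$, and this interval is maximal exactly when $p\in A_G$. Thus I would show that $b(p;S)$ is extremal if and only if $S=N(p)$ (so $i=|N(p)|$) and $p\in A_G$, with $b=\multideg(b(p;N(p)))$.

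The main obstacle is the careful bookkeeping in the second direction: verifying that \emph{no} larger multidegree $c$ in any higher homological degree $j$ violates extremality requires controlling comparisons across all pairs $(p',S')$, not merely those with $S'=N(p')$ and not merely those sitting inside the interval of $p$. The decisive step is to reduce an arbitrary potential violator $b(p';S')$ to one with $S'=N(p')$ by enlarging $S'$ to $N(p')$ (which by Lemma~\ref{lem} only increases or preserves the relevant degree gap) and then to invoke the maximality of the Boolean interval attached to $p\in A_G$ via Lemma~\ref{lem2} to conclude that the required inequality cannot hold strictly. Establishing that this reduction is valid, and that Lemma~\ref{lem2} applies precisely in the intervals forced by the divisibility $b\mid c$, is where the argument demands the most attention; the converse, that every such $p\in A_G$ does yield an extremal Betti-number, follows more directly by exhibiting the box of zeroes from the maximality of the interval.
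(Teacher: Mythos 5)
Your overall route coincides with the paper's: the same minimal resolution from Theorem~\ref{res}, the identification of multigraded Betti-numbers with the basis elements $b(p;S)$, and Lemmas~\ref{lem} and \ref{lem2} together with Proposition~\ref{3} for the degree bookkeeping. Your treatment of the second assertion is essentially the paper's argument and is sound in outline: reduce a potential violator $b(p';S')$ to $b(p';N(p'))$ using the inequality following Lemma~\ref{lem}, translate the condition $|c|-|b|\geq j-i$ into the inequality of Lemma~\ref{lem2}, and use maximality of the interval $[\wedge\{r;\,r\in N(p)\},p]$ for $p\in A_G$. However, your proof of the \emph{first} assertion ($i$-extremality) has a genuine gap. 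You correctly derive from divisibility that $p'\subseteq p$ and $\wedge\{r;\,r\in S\}\subseteq \wedge\{r;\,r\in S'\}$, but then you only say you ``expect the rank-preserving behaviour encoded in Lemma~\ref{lem} to rule out a proper divisibility.'' Lemma~\ref{lem} and its corollary compare \emph{nested} subsets of a \emph{single} neighbour set $N(p)$; here $S\subseteq N(p)$ and $S'\subseteq N(p')$ with possibly $p\neq p'$, so that lemma simply does not apply, and the equality $|S|=|S'|=i$ by itself cannot exclude a strict divisibility between the two multidegrees.

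The missing idea is to use Proposition~\ref{3} in this step as well, as the paper does. The divisibility gives the containment of intervals $[\wedge\{r;\,r\in S'\},p']\subseteq [\wedge\{r;\,r\in S\},p]$, and by Proposition~\ref{3} \emph{both} intervals are isomorphic to Boolean lattices of the \emph{same} rank $i$. An interval of a Boolean lattice $B_i$ that is itself isomorphic to $B_i$ must be the whole lattice (compare lengths of maximal chains, or count its $2^i$ elements), so $p=p'$ and $\wedge\{r;\,r\in S\}=\wedge\{r;\,r\in S'\}$; only now does Lemma~\ref{lem} enter, forcing $S=S'$ and hence equal multidegrees, contradicting strictness. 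Note also that your cardinality-based intuition would be misleading here: the set-theoretic difference $|p|-|\wedge\{r;\,r\in S\}|$ can exceed the lattice rank $i$ of the interval, so the argument must run through the interval structure of $\mathcal{L}_G$, not through cardinalities in the ambient Boolean lattice $B_n$. With this equal-rank interval comparison inserted, your proposal matches the paper's proof.
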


\begin{proof}
Let $b(p;S)$ and $b(p';S')$ be two basis elements of $\mathbb{F}_i$
such that $\multideg(b(p;S))$ divides $\multideg(b(p';S'))$. Then
$p\leq p'$ and $\wedge\{q; q\in S'\}\leq \wedge\{q; q\in S\}$ and
$|S|=|S'|=i$. Therefore  $[\wedge\{q; q\in S\},p]\subseteq
[\wedge\{q; q\in S'\},p']$. Since both intervals are isomorphic to a
Boolean lattice of rank $i$, one has $p=p'$ and $\wedge\{q; q\in
S\}=\wedge\{q; q\in S'\}$. Since $S,S'\subseteq N(p)$ from Lemma
\ref{lem} we have $S=S'$. Therefore all multigraded Betti-numbers in
homological degree $i$ are $i$-extremal.

For any basis element $b(p;S)$, $\multideg(b(p;S))=X_pY_{[n]\setminus
\wedge\{q; q\in S\}}$.  Therefore $\multideg(b(p;S))$ divides
$\multideg(b(p;N(p)))$ and $|N(p)|-|S|\leq |\wedge\{q; q\in
S\}|-|\wedge\{q; q\in N(p)\}|$. Therefore $\beta_{i,b}$ is extremal
only if $b=\multideg(p;N(p))$ for some $p\in \mathcal{L}_G$. Let $p$
and $q$ be two elements of $\mathcal{L}_G$ and $\multideg(b(p;N(p)))$
divides $\multideg(b(q;N(q)))$. Then $p\leq q$ and $\wedge\{r; r\in
N(q)\}\leq \wedge\{r; r\in N(p)\}$. Therefore $[\wedge\{r; r\in
N(p)\},p]\subseteq [\wedge\{r; r\in N(q)\},q]$. Then
$\beta_{|N(p)|,b}$ is extremal precisely when
$b=\multideg(b(p;N(p)))$, where $[\wedge\{q; q\in N(p)\},p]$ is a
maximal Boolean sublattice of $\mathcal{L}_G$.
\end{proof}

The graded extremal Betti-numbers of $R/I(G)$ can be seen from the
lattice $\mathcal{L}_G$. Indeed, the Betti-number $\beta_{i,i+j}$
for $R/I(G)$ is extremal if and only if there exists $p\in A_G$ with
$|N(p)|=j$ and $n+|p|-|N(p)|-|\wedge\{r; r\in N(p)\}|=i$ such that:
\begin{enumerate}
\item[(a)] For any $q\in A_G$ with $|N(q)|>|N(p)|$, one has
$$|q|-|N(q)|-|\wedge\{r; r\in N(q)\}|<|p|-|N(p)|-|\wedge\{r; r\in
N(p)\}|.$$

\item[(b)] For any $q\in A_G$ with $|N(q)|=|N(p)|$, one has
$$|q|-|\wedge\{r; r\in N(q)\}|\leq |p|-|\wedge\{r; r\in N(p)\}|.$$
\end{enumerate}

This statement follows immediately from the definition of graded extremal Betti-numbers by using \cite[Theorem~2.8]{BCP}.

\begin{exam}
{\em For any $p\in \mathcal{L}_G$ let $f(p)=|p|-|N(p)|-|\wedge\{r;
r\in N(p)\}|$. In the following lattice we have $f(p)=1$ for
$p=\{1,2,3\}$, $p=\{1,2,3,4\}$ and $p=\{1,2,3,4,5,6,7\}$ and
$f(p)=0$ for $p=\{1,2,3,5\}$ and $p=\{1,2,3,4,5\}$. Therefore by the
statement above, the extremal Betti-number of the unmixed bipartite
graph corresponding to this lattice is $\beta_{i,i+j}$ for
$i=7+f(\{1,2,3,4\})=7+f(\{1,2,3\})=8$ and $j=2$ and
$\beta_{8,10}=2$.}
\end{exam}
\vskip1.5truecm

\begin{center}{
\setlength{\unitlength}{2600sp}%
\begingroup\makeatletter\ifx\SetFigFont\undefined%
\gdef\SetFigFont#1#2#3#4#5{%
  \reset@font\fontsize{#1}{#2pt}%
  \fontfamily{#3}\fontseries{#4}\fontshape{#5}%
  \selectfont}%
\fi\endgroup%
\begin{picture}(2446,2420)(196,-3927)
{\color[rgb]{0,0,0}\thinlines \put(1419,-1548){\circle*{78}}
}%
{\put(1419,-1548){\line( 0,1){900}}
}%
{\put(1419,-670){\circle*{78}}
}%
{\put(1000,-610){1234567}
}%
{\put(1200,-1210){1}
}%

{\color[rgb]{0,0,0}\thinlines \put(1470,-1448){12345}
}%
{\color[rgb]{0,0,0}\put(1996,-2122){\circle*{78}}
}%
{\color[rgb]{0,0,0}\put(2040,-2122){1234}
}%
{\color[rgb]{0,0,0}\put(1320,-2122){0}
}%

{\color[rgb]{0,0,0}\put(2000,-2722){1}
}%

{\color[rgb]{0,0,0}\put(2600,-2730){\circle*{78}}
}%
{\color[rgb]{0,0,0}\put(2660,-2705){34}
}%
{\color[rgb]{0,0,0}\put(2010,-3319){\circle*{78}}
}%
{\color[rgb]{0,0,0}\put(2120,-3359){1}
}%
{\color[rgb]{0,0,0}\put(1420,-3359){1}
}%

{\color[rgb]{0,0,0}\put(1419,-3886){\circle*{78}}
}%
{\color[rgb]{0,0,0}\put(1350,-4140){$\emptyset$}
}%
{\color[rgb]{0,0,0}\put(1000,-4500){Figure 1}
}%
{\color[rgb]{0,0,0}\put(1419,-3886){\circle*{78}}
}%
{\color[rgb]{0,0,0}\put(829,-3319){\circle*{78}}
}%
{\color[rgb]{0,0,0}\put(430,-3340){12}
}%
{\color[rgb]{0,0,0}\put(238,-2729){\circle*{78}}
}%
{\color[rgb]{0,0,0}\put(-200,-2729){125}
}%
{\color[rgb]{0,0,0}\put(819,-2138){\circle*{78}}
}%
{\color[rgb]{0,0,0}\put(250,-2138){1235}
}%
{\color[rgb]{0,0,0}\put(1396,-2729){\circle*{78}}
}%
{\color[rgb]{0,0,0}\put(1196,-2529){123}
}%
{\color[rgb]{0,0,0}\put(796,-2729){0}
}%

{\color[rgb]{0,0,0}\put(1418,-1547){\line( 1,-1){1157}}
}%
{\color[rgb]{0,0,0}\put(248,-2739){\line( 1,-1){1157}}
}%
{\color[rgb]{0,0,0}\put(2599,-2739){\line(-1,-1){1157}}
}%
{\color[rgb]{0,0,0}\put(1396,-1547){\line(-1,-1){1157}}
}%
{\color[rgb]{0,0,0}\put(838,-2149){\line( 1,-1){1157}}
}%
{\color[rgb]{0,0,0}\put(1980,-2149){\line( -1,-1){1157}}
}%
\end{picture}%

}\end{center}
\vskip1.5truecm

As a final application we give a lower bound for the last nonzero
total Betti-number of an unmixed bipartite graph. To describe the
result, we introduce the set $B_G\subseteq A_G$ consisting
 of all elements $q$ such that $|q|-|N(q)|-|\wedge\{r; r\in
N(q)\}|=\max_{p\in A_G}\{|p|-|N(p)|-|\wedge\{r; r\in N(p)\}|\}$.

For an $R$-module $M$, let $t(M)$ denotes the last nonzero total
Betti-number of $M$. Then we have the following corollary.

\begin{cor}\label{total}
Let $G$ be an unmixed bipartite graph. Then $t(R/I(G))\geq |B_G|$.
\end{cor}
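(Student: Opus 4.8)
The plan is to realize the elements of $B_G$ as distinct multigraded extremal Betti numbers of $H_{\mathcal{L}_G}$ lying in the top (regularity) row of the Betti diagram, and then to transport them through Alexander duality into the last homological degree of the resolution of $R/I(G)$, where they can be counted.

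First I would record, for each $p\in B_G$, the basis element $b(p;N(p))$ of $\mathbb{F}_{|N(p)|}$. By the preceding corollary on multigraded extremal Betti numbers, $\beta_{|N(p)|,b_p}(H_{\mathcal{L}_G})$ with $b_p=\multideg(b(p;N(p)))=X_pY_{[n]\setminus\wedge\{r; r\in N(p)\}}$ is a nonzero extremal Betti number. Its homological degree is $|N(p)|$, and since $|b_p|=|p|+n-|\wedge\{r; r\in N(p)\}|$ it sits in row $|b_p|-|N(p)|=n+f(p)$, writing $f(p)=|p|-|N(p)|-|\wedge\{r; r\in N(p)\}|$. Because $p\in B_G$ the value $f(p)$ is maximal over $A_G$, so by the computation of $\reg(H_{\mathcal{L}_G})$ in Corollary~\ref{cor1} all of these extremal numbers lie in the single top row $j=\reg(H_{\mathcal{L}_G})$. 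Moreover distinct $p$ yield distinct multidegrees: the $x$-part $X_p$ of $b_p$ already determines $p$. Thus $B_G$ produces $|B_G|$ pairwise distinct multidegrees carrying nonzero extremal Betti numbers of $H_{\mathcal{L}_G}$, all in the regularity row.

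Next I would apply the Bayer--Charalambous--Popescu correspondence \cite[Theorem~2.8]{BCP} to $I(G)=H_{\mathcal{L}_G}^{\vee}$. Alexander duality transposes the Betti diagram so that the regularity of $H_{\mathcal{L}_G}$ becomes the projective dimension of $R/I(G)$ (this is the identity $\reg(H_{\mathcal{L}_G})=\pd(R/I(G))$ already used in the proof of Corollary~\ref{cor1}); under this correspondence the multigraded extremal Betti numbers of $H_{\mathcal{L}_G}$ in the top row are matched bijectively, and with equal values, to multigraded extremal Betti numbers of $R/I(G)$ sitting in homological degree $\pd(R/I(G))$, the associated multidegree map being injective (it is induced by complementation of squarefree multidegrees). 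Hence each $p\in B_G$ contributes a nonzero multigraded Betti number $\beta_{\pd(R/I(G)),c_p}(R/I(G))$ with the $c_p$ pairwise distinct. Summing over multidegrees and using that $t(R/I(G))$ is by definition the total Betti number $\beta_{\pd(R/I(G))}(R/I(G))=\sum_{c}\beta_{\pd(R/I(G)),c}(R/I(G))$ in the last nonzero homological degree, the presence of at least $|B_G|$ distinct multidegrees $c_p$ with nonzero Betti number there yields $t(R/I(G))\geq|B_G|$.

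The main obstacle I anticipate is the bookkeeping in the duality step: one must check that the correspondence of \cite{BCP} sends precisely the regularity row of $H_{\mathcal{L}_G}$ to the last homological degree of $R/I(G)$, and that it preserves multidegree distinctness, so that the contributions do not collapse and the count is genuinely bounded below by $|B_G|$. The inequality rather than equality reflects exactly the phenomenon discussed in the introduction: the last step of the resolution of $R/I(G)$ may also contain nonzero multigraded Betti numbers that fail to be extremal, and these are not accounted for by $B_G$.
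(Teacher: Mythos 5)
Your proposal is correct and is essentially the paper's own argument, only with the bookkeeping made explicit: the paper likewise takes, for each $p\in B_G$, the extremal multigraded Betti number $\beta_{|N(p)|,b}(H_{\mathcal{L}_G})=1$ with $b=\multideg(b(p;N(p)))$, transfers it via \cite[Theorem~2.8]{BCP} to $\beta_{r,b}(R/I(G))$ where $r=\pd(R/I(G))=n+|p|-|N(p)|-|\wedge\{q;\, q\in N(p)\}|$, and counts the distinct multidegrees. The one inaccuracy is your parenthetical description of the duality mechanism --- in the BCP correspondence the squarefree multidegree is \emph{preserved} (the same $b$ appears on both sides, only the homological degree shifts), not complemented --- but since preservation is injective, your distinctness argument and the resulting bound $t(R/I(G))\geq |B_G|$ stand unchanged.
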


\begin{proof}
Let $p\in B_G$ and $r=\pd(R/I(G))$. Then $r=n+|p|-|N(p)|-|\wedge\{r;
r\in N(p)\}|$. Let $b=b(p;N(p))$, from \cite[Theorem~2.8]{BCP} we
have $\beta_{r,b}(R/I(G))=\beta_{|N(p)|,b}(H_{\mathcal{L}_G})=1$.
Therefore $T(R/I(G))\geq|B_G|$.
\end{proof}

We do not know of any example of a monomial ideal $I$ of projective
dimension $r$ for which there exists a nonzero multigraded
Betti-number $\beta_{r,b}$ which is not extremal. If such ideals
don't exist, at least among the edge ideals of unmixed bipartite
graphs, then we would have equality in Proposition \ref{total}.

In general the  multigraded extremal Betti-numbers of monomial ideal not only appear in the last step of the resolution. However we have

\begin{prop}\label{cm}
Let $R=k[x_1,\ldots,x_n]$ for a field $k$ and $I$ a monomial ideal
of $R$ such that $R/I$ is a Cohen-Macaulay ring. Then for all multigraded extremal Betti-numbers $\beta_{i,b}(R/I)$  we have $i=\pd(R/I)$.
\end{prop}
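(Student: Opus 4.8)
The plan is to translate the homological position of an extremal Betti-number into the (non)vanishing of a single local cohomology module, and then to use the Cohen-Macaulay hypothesis to pin that module to one cohomological degree. Write $p=\pd(R/I)$. By the Auslander-Buchsbaum formula together with the Cohen-Macaulay assumption, $p=n-\depth(R/I)=n-\dim(R/I)$, and, again because $R/I$ is Cohen-Macaulay, the local cohomology modules $H^{\ell}_{\mm}(R/I)$ vanish for every $\ell\neq \dim(R/I)=n-p$. By graded local duality, $H^{n-i}_{\mm}(R/I)$ is the graded Matlis dual of $\Ext^i_R(R/I,R(-n))$, so $H^{n-i}_{\mm}(R/I)=0$ if and only if $\Ext^i_R(R/I,R)=0$. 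Hence it suffices to prove the implication: if $\beta_{i,b}(R/I)$ is an extremal multigraded Betti-number, then $\Ext^i_R(R/I,R)\neq 0$; for then $n-i=n-p$ and $i=p$, as claimed.

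To establish this implication I would work with the minimal multigraded free resolution $\mathbb{F}$ of $R/I$ and compute $\Ext^i$ from the dualized complex $\Hom_R(\mathbb{F},R)$. Let $e_b\in \mathbb{F}_i$ be the basis element of multidegree $b$ witnessing $\beta_{i,b}\neq 0$, and let $e_b^{*}\in \mathbb{F}_i^{*}$ be the dual generator, which sits in multidegree $-b$. The differential $\partial_{i+1}\colon \mathbb{F}_{i+1}\to \mathbb{F}_i$ sends each basis element $g_c$ of multidegree $c$ to $\sum_b \lambda_{c,b}e_b$ with $\lambda_{c,b}$ homogeneous of multidegree $c-b$; minimality forces $\lambda_{c,b}\in \mm$, so any $c$ with $\lambda_{c,b}\neq 0$ satisfies $c>b$ and $|c|-|b|\geq 1=(i+1)-i$. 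But the extremality of $\beta_{i,b}$, applied with $j=i+1$, gives $\beta_{i+1,c}=0$ for exactly these $c$; thus no such $g_c$ occurs and the transposed differential annihilates $e_b^{*}$, that is $e_b^{*}\in\ker(\partial_{i+1}^{*})$.

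It remains to check that $e_b^{*}$ is not a coboundary, i.e.\ not in $\Im(\partial_i^{*})$, where $\partial_i^{*}\colon \mathbb{F}_{i-1}^{*}\to \mathbb{F}_i^{*}$. This is immediate from minimality once more: the entries of $\partial_i$ lie in $\mm$, so $\Im(\partial_i^{*})\subseteq \mm\,\mathbb{F}_i^{*}$, while $e_b^{*}$ is a minimal generator of $\mathbb{F}_i^{*}$. Therefore the class of $e_b^{*}$ is nonzero in $\Ext^i_R(R/I,R)$, which finishes the key implication and hence the Proposition. The one genuinely delicate point is the cocycle step of the middle paragraph: one must see that the extremality condition is exactly calibrated to kill every matrix entry of the transposed differential leaving $e_b^{*}$, which is precisely why the statement is phrased for extremal—rather than arbitrary—Betti-numbers. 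Everything else is the standard dictionary, namely Auslander-Buchsbaum, the concentration of local cohomology for Cohen-Macaulay modules, and graded local duality.
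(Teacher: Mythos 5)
Your proof is correct and follows essentially the same route as the paper: both arguments show that extremality (applied with $j=i+1$) together with minimality of the resolution forces the dual basis element $e_b^{*}$ to represent a nonzero class in $\Ext^i_R(R/I,R)$, and then use the Cohen--Macaulay hypothesis to conclude that this can only happen for $i=\pd(R/I)$. The only cosmetic differences are that you justify the vanishing of $\Ext^i_R(R/I,R)$ for $i\neq\pd(R/I)$ via local cohomology and graded local duality, whereas the paper directly cites $\Ext^j_R(R/I,R)=0$ for $j<\pd(R/I)$ (grade-sensitivity of $\Ext$ under Cohen--Macaulayness), and that you spell out the non-coboundary check ($\Im(\partial_i^{*})\subseteq \mm\,\mathbb{F}_i^{*}$) which the paper leaves implicit.
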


\begin{proof}
{ Assume that $$0\rightarrow F_r\rightarrow \cdots \rightarrow
F_{i+1}\rightarrow F_i\rightarrow \cdots \rightarrow F_0\rightarrow
R/I\rightarrow 0$$ is a minimal graded free resolution of $R/I$,
where $r=\pd(R/I)$ and $\varphi$ be the function $F_{i+1}\rightarrow
F_i$ in the resolution. Let $\beta_{i,b}(R/I)$ be a multigraded
extremal Betti-number with $i<\pd(R/I)$ and $e$ be the basis element
in $F_i$ with multidegree $b$. Then for any basis element $g$ of
$F_{i+1}$ the coefficient of $e$ in $\varphi(g)$ is zero. Otherwise
$\multideg(e)<\multideg(g)$ and $g\in F_{i+1}$ but then
$\beta_{i,b}$ is not extremal, a contradiction. This means that
$e^*$ is a cycle in the resolution $$0\rightarrow (R/I)^*\rightarrow
F_0^*\rightarrow \cdots \rightarrow F_i^*\rightarrow
F_{i+1}^*\rightarrow \cdots \rightarrow F_r^*\rightarrow 0,$$ where
$F_i^*=\Hom_R(F_i,R)$. Therefore $\Ext^i_R(R/I,R)\neq 0$. But we
know that $\Ext^j_R(R/I,R)=0$ for any $j<r$, a contradiction.
 }
\end{proof}

\providecommand{\byame}{\leavevmode\hbox
to3em{\hrulefill}\thinspace}

\end{document}